\DeclareMathOperator{\rank}{rank}
\newtheorem{theorem}{Theorem}[section]
\theoremstyle{definition}
\newcommand{\beql}[1]{\begin{equation}\label{#1}}
	\newcommand{\eeq}{\end{equation}}
\newcommand{\comment}[1]{}
\newcommand{\Ds}{\displaystyle}
\newcommand{\Abs}[1]{{\left|{#1}\right|}}
\newcommand{\Set}[1]{{\left\{{#1}\right\}}}
\newcommand{\RR}{{\mathbb R}}
\newcommand{\ZZ}{{\mathbb Z}}
\newcommand{\NN}{{\mathbb N}}
\newcommand{\one}{{\bf 1}}
\newcommand{\vol}{{\rm vol\,}}
\newcommand{\wt}[1]{\widetilde{#1}}
\newcommand{\trn}{\Set{0}^m\times\RR^n}
\newcommand{\ztrn}{\ZZ^m\times\RR^n}
\newcounter{rem}
\newcounter{step}
\newcounter{mysec}
\newcounter{mysubsec}[mysec]
\title{Bounded lattice tiles that pack with another lattice}
\author{Sigrid Grepstad}
\address{\href{https://www.ntnu.edu/imf}{Department of Mathematical Sciences}, Norwegian University of Science and Technology (NTNU), NO-7491, Trondheim, Norway.}
\email{sigrid.grepstad@ntnu.no}
\thanks{S.G.\ is supported by Grant 334466 of the Research Council of Norway.}
\author{Mihail N. Kolountzakis}
\address{\href{http://math.uoc.gr/en/index.html}{Department of Mathematics and Applied Mathematics}, University of Crete,\\Voutes Campus, 70013 Heraklion, Greece,\\and\\ \href{https://ics.forth.gr/}{Institute of Computer Science}, Foundation of Research and Technology Hellas, N. Plastira 100, Vassilika Vouton, 700 13, Heraklion, Greece}
\email{kolount@gmail.com}
\author{Emmanuil Spyridakis}
\address{\href{http://math.uoc.gr/en/index.html}{Department of Mathematics and Applied Mathematics}, University of Crete,\\Voutes Campus, 70013 Heraklion, Greece.}
\email{manos.ch.spyridakis@gmail.com}
\thanks{E.S.\ gratefully acknowledges the support of the ERC starting grant 101078061 SINGinGR,
under the European Union's Horizon Europe program for research and innovation}
\begin{document}

\makeatletter
\@namedef{subjclassname@2020}{\textup{2020} Mathematics Subject Classification}
\makeatother
\subjclass[2020]{52B20, 52C22, 11H16}

\keywords{Tiling, lattices, packing.}

\begin{abstract}

Suppose $L, M$ are full-rank lattices in Euclidean space, such that $\vol L < \vol M$. Answering a question of Han and Wang \cite{han2001lattice} from 2001, we show how to construct a bounded measurable set $F$ (we can even take $F$ to be a finite union of polytopes) such that $F+L$ is a tiling and $F+M$ is a packing. If we do not require measurability of $F$ it is often possible that a set $F$ can be found tiling with both $L$ and $M$ even when $L$ and $M$ have different volumes, for instance if $L \cap M = \Set{0}$. We also show here that such a set can never be bounded if $\vol L \neq \vol M$.

\end{abstract}

\date{\today}

\maketitle

\tableofcontents

\section{Introduction}

\subsection{The concept of a lattice and its fundamental domains}
A  lattice $L$ in $\RR^d$ is a discrete subgroup of $\RR^d$. The dimension of the $\RR$-linear subspace that is spanned by $L$ is called the \textit{rank} of $L$. In particular if $L$ spans $\RR^d$, we will call it a full-rank lattice.
It is known that any full-rank lattice on $\RR^d$ is equal to $A\ZZ^d$ for some non singular $d\times d$ matrix. A \textit{fundamental parallelepiped} of the lattice $A\ZZ^d$ is the set $P=A[0,1)^d$. Observe this is not uniquely defined as each lattice $L$ admits many matrices $A$ such that $L=A\ZZ^d$.The volume of $P$ is called the volume of the lattice and it is denoted by $\vol(L)$. The volume of the lattice does not depend on which matrix $A$ we use in the representation of the lattice as $A\ZZ^d$.
	
For a given lattice $L\subseteq \RR^d$, any fundamental parallelepiped $P$ of $L$ has the property of containing exactly one element from each class of the quotient group $\RR^d/L$ (each coset of $L$ in $\RR^d$). $P$ is not the only set with this property. In fact there are many sets with this property and each of them is called a \textit{fundamental domain} for the lattice $L$.  

\subsection{Lattice tilings}
Suppose $\Omega\subseteq \RR^d$ is a measurable set and $A\subseteq \RR^d$ is a discrete set. We say that $\Omega$ \textit{packs} $\RR^d$ by translations by $A$, if the following inequality holds:
\begin{equation}\label{pack}
	\sum_{t\in A} \one_{\Omega}(x+t)\le 1
\end{equation} 
for almost every $x\in \RR^d$. If instead of inequality in (\ref{pack}) we have equality for almost every $x\in \RR^d$, then we say that $\Omega$ \textit{tiles} $\RR^d$ by translations of the set $A$. 

It is easy to see that a fundamental parallelepiped $P$ of a full-rank lattice $L\subseteq \RR^d$, forms a tiling of $\RR^d$ by translations with the lattice. Since $P$ contains exactly one point from each class in $\RR^d/L$, we get that $x$ can be uniquely written as an element of $P$ that belongs to the class $x \bmod(L)$, translated by a point in the lattice $L$. In fact every fundamental domain of $L$ forms a tiling of $\RR^d$ by translations with $L$, while the converse is also true with a minor adjustment: every measurable set that tiles $\RR^d$ by translations with $L$, differs from a fundamental domain of $L$ by a null set (a set of measure $0$). We call such sets \textit{almost fundamental domains} of $L$.

By what we said previously, any measurable subset of an almost fundamental domain  for a lattice $L\subseteq \RR^d$, packs $\RR^d$ by translations of $L$. For the converse we have that a set which packs $\RR^d$ by translations of $L$, is a subset of a fundamental domain up to a null set.

\subsection{Common fundamental domains}
The study of the fundamental domains for a lattice is related to the well-known \textit{Steinhaus tiling problem} \cite{sierpinski1958probleme}, in which  it is asked, whether there exists a set $E\subseteq \RR^2$ that tiles $\RR^2$ by translations of any rotation of the lattice $\ZZ^2$. In other words we are seeking a common fundamental domain for all lattices of the form $R\ZZ^2$, for $R$ being a $2\times2$ rotation matrix in $\RR^2$. This problem has a set-theoretic and a measurable incarnation. In the first case we seek a precise (no exceptions), not necessarily measurable fundamental domain for all these lattices (this was solved in the affirmative in \cite{jackson2002sets}). In the second case we seek a measurable set that is an almost fundamental domain for each of these lattices (this is still open in the plane and the best results so far are in \cite{kolountzakis1999steinhaus}). We are mostly interested in the measurable version of the problem.

A sensible relaxation of this problem is to seek a common almost fundamental domain for a finite family of lattices. These lattices must all have the same volume for such  a set to exist. In \cite{kolountzakis1997multi} it was proved that, whenever we have finitely many full rank lattices on $\RR^d$ with the same volume, such that their dual lattices have a direct sum, there exists a measurable common fundamental domain for all the lattices in that finite family. This set is generally unbounded. Under the assumption that the sum of the lattices themselves is direct it was also proved that there exists a not necessarily measurable but \textit{bounded} common fundamental domain for all the lattices in that finite family. Later Han and Wang in \cite{han2001lattice} dropped the density assumption for the sum of two lattices and  showed the existence of a measurable, in general not bounded, common fundamental domain for any two lattices of the same volume. They also showed that this result cannot be generalized for more than two lattices with the same volume, by giving a counterexample (as was done in \cite{kolountzakis1997multi}).

Recently, Grepstad and Kolountzakis \cite{GK2025} proved the existence of a measurable bounded common almost fundamental domain for any two lattices with the same volume.
	
\subsection{Bounded common fundamental domain requires equal volumes}
A question that comes naturally from what we mentioned above, is whether these results can be generalized for any two full rank lattices. In particular, can we have a common fundamental domain for two lattices of different volume? 

In the measurable case, it is obvious that such a set cannot exist since every tile set of a lattice has measure that equals the volume of the lattice. However, if we do not demand measurability it is unclear whether such a set exists. In \cite[Theorem 1]{kolountzakis1997multi} it is shown, among others, that if two lattices in $\RR^d$ have a direct sum then a common fundamental domain of them in $\RR^d$ exists, but, of course, is not measurable if the volumes are different.

It is also shown in \cite{kolountzakis1997multi} that if two lattices in $\RR^d$ have a direct sum and the same volume then they have a common, bounded fundamental domain, which is not necessarily measurable. It is interesting in this result that the volume of the lattices seems to play a role in the properties of the common fundamental domain despite not requiring measurability of the domain.

We will show here that in the case where the sum of the two lattices is direct (i.e., they intersect only at the origin) and the lattices have unequal volumes, then any common fundamental domain must be an unbounded set. In other words we will prove:
\begin{theorem}\label{NoFD}
Assume that $L,M$ are two-full rank lattices in $\RR^d$, with $\vol(L)< \vol(M)$ such that $L\cap M =\{0\}$. Furthermore assume that $F$ is a common fundamental domain of $L, M$ in $\RR^d$. Then $F$ is unbounded.
\end{theorem}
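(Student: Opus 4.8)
The plan is to assume that $F$ is bounded, say $F\subseteq B(0,R)$ for some $R>0$, and to reach the contradiction $\vol(M)\le\vol(L)$. The mechanism is that boundedness of $F$ lets us manufacture, from the two tilings, a \emph{bounded-displacement bijection} $\rho\colon L\to M$, and such a bijection can exist only when the two lattices have the same covolume. Concretely: since $F+M$ is a tiling, each point of $\RR^d$ — in particular each $\ell\in L$ — lies in exactly one translate $F+m$ with $m\in M$; let $\rho(\ell)$ be that unique $m$, so that $\ell-\rho(\ell)\in F$ for every $\ell\in L$.

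Next I would verify that $\rho$ is a bijection. Injectivity: if $\rho(\ell_1)=\rho(\ell_2)=m$, then $\ell_1-m$ and $\ell_2-m$ both lie in $F$ and differ by $\ell_1-\ell_2\in L$; but since $F+L$ is a tiling the translates $F+\ell$ ($\ell\in L$) are pairwise disjoint, so two points of $F$ cannot differ by a nonzero element of $L$, whence $\ell_1=\ell_2$. Surjectivity: given $m\in M$, let $f$ be the unique point of $F$ lying in the $L$-coset of $-m$ (it exists because $F$ meets every $L$-coset exactly once); then $\ell:=f+m\in L$ satisfies $\ell-m=f\in F$, so $\rho(\ell)=m$. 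Finally, the displacement is bounded: $\|\rho(\ell)-\ell\|=\|\ell-\rho(\ell)\|\le R$ because $\ell-\rho(\ell)\in F\subseteq B(0,R)$.

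It then remains to exploit $\rho$ by counting lattice points in large balls. If $\ell\in L$ and $\|\ell\|\le N$, then $\|\rho(\ell)\|\le N+R$; injectivity of $\rho$ therefore gives $\#\bigl(L\cap B(0,N)\bigr)\le\#\bigl(M\cap B(0,N+R)\bigr)$ for every $N>0$. By the standard asymptotics for the number of points of a full-rank lattice in a large ball, $\#\bigl(L\cap B(0,N)\bigr)=\omega_d N^d/\vol(L)+O(N^{d-1})$ and likewise for $M$, where $\omega_d=\vol(B(0,1))$. Dividing the inequality by $N^d$ and letting $N\to\infty$ yields $1/\vol(L)\le 1/\vol(M)$, i.e.\ $\vol(M)\le\vol(L)$, contradicting the hypothesis $\vol(L)<\vol(M)$. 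Hence $F$ cannot be bounded.

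I do not anticipate a genuine obstacle: the single idea that has to be found is that boundedness of $F$ substitutes for the measurability used in the trivial measurable case, through the matching $\rho$; everything afterwards is routine. I would also remark that this argument never uses $L\cap M=\{0\}$ — that hypothesis serves only to guarantee, via \cite{kolountzakis1997multi}, that a common fundamental domain exists at all, so that the statement is not vacuous — and in fact the same proof shows that \emph{any} bounded common fundamental domain of two full-rank lattices forces the lattices to have equal covolume.
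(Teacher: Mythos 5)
Your proof is correct, and its engine is the same as the paper's: a bounded common fundamental domain yields a bijection between the two lattices that moves points by at most a bounded amount, and counting lattice points in large balls then contradicts $\vol(L)<\vol(M)$. The implementation differs in a worthwhile way, though. The paper first intersects $F$ with the countable group $L+M$ and uses the hypothesis $L\cap M=\{0\}$ to write this intersection as $\{n+Am_n:\ n\in\ZZ^d\}$ with $n\mapsto m_n$ a permutation of $\ZZ^d$; the bounded-displacement bijection is thus extracted through the direct-sum parametrization, and only an injectivity count in boxes $B_R$ versus $B_{R+r}$ is needed. You instead build the matching $\rho\colon L\to M$ directly from the exact (set-theoretic) tiling $F+M=\RR^d$ evaluated at the points of $L$, with injectivity coming from the exact tiling by $L$; this never uses $L\cap M=\{0\}$, so your argument proves the stronger statement that \emph{any} two full-rank lattices admitting a bounded common fundamental domain must have equal covolume, with the direct-sum hypothesis serving only (as you note) to make the statement non-vacuous via the existence result of \cite{kolountzakis1997multi}. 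The trade-off is negligible: the paper's reduction to $L+M$ is slightly more concrete (an explicit permutation of $\ZZ^d$), while your version is marginally more general and avoids the normalization $L=\ZZ^d$, $M=A\ZZ^d$; the lattice-point asymptotics you invoke are standard and only the leading term is needed.
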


Theorem \ref{NoFD} extends the corresponding result that was given in \cite[Theorem 3.3]{kolountzakis2022functions} which concerned lattices that are dilates of each other.

\subsection{Tiling with one lattice, packing with the other}
In the case where the volumes of two lattices $L,M$ differ, say $\vol(L)<\vol(M)$, Han and Wang \cite[Theorem 1.2]{han2001lattice} showed the existence of a measurable set  $F$ that tiles $\RR^d$ by translations with the lattice $L$ and packs  $\RR^d$ by translations with the lattice $M$. Han and Wang raised the question whether this set $F$ can always be chosen to be bounded. Our main result shows that such an $F$ can indeed always be taken to be a bounded set.
\begin{theorem}\label{Main}
Let $L, M$ be two full rank lattices on $\RR^d$, such that $\vol(L)<\vol(M)$. There exists a bounded measurable set $F$ that tiles $\RR^d$ by translations with $L$ and packs $\RR^d$ by translations with $M$.
\end{theorem}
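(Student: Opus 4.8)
The plan is to build $F$ by a greedy, layer-by-layer transfer argument, starting from a fundamental domain of $L$ and carving pieces out of it so that the result packs with $M$. Fix a bounded fundamental parallelepiped $P$ of $L$; it tiles with $L$. The obstruction to $P$ packing with $M$ is that some $M$-translates of $P$ overlap, i.e.\ there are points $x$ covered by $P+m$ for more than one $m\in M$. The idea is: whenever a point of $F$ is ``responsible'' for a double cover under $M$, move it to a different $L$-coset representative, chosen so that it lands in a region not yet claimed for the $M$-packing. Concretely, write $P$ as a disjoint union of small cubes $Q_i$ of sidelength $\varepsilon$; for each $Q_i$ we will choose a lattice vector $\ell_i\in L$ and set $F=\bigcup_i (Q_i+\ell_i)$. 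Any such $F$ automatically tiles with $L$ (we have only shuffled coset representatives), so the entire content of the proof is to choose the $\ell_i$ so that $F+M$ is a packing, and to keep $F$ bounded, i.e.\ the $\ell_i$ in a bounded set.

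The mechanism for choosing the $\ell_i$ is a volume/measure-counting argument exploiting $\vol L<\vol M$. Think of it as follows: we want to place the translated cubes $Q_i+\ell_i$ into $\RR^d$ so that the collection $\{Q_i+\ell_i+m : i, m\in M\}$ is pairwise (almost) disjoint. Equivalently, reduce everything modulo $M$: we need the sets $\pi(Q_i+\ell_i)$ to be pairwise disjoint in the torus $\TT_M=\RR^d/M$, where $\pi$ is the quotient map. The total measure we must fit is $\sum_i |Q_i| = \vol L$, while the torus $\TT_M$ has measure $\vol M>\vol L$, so there is room to spare. Process the cubes $Q_i$ one at a time. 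Having placed $Q_1+\ell_1,\dots,Q_{k-1}+\ell_{k-1}$, the ``forbidden'' region in $\TT_M$ for the next cube has measure at most $\sum_{j<k}|Q_j|\le \vol L < \vol M$, so a positive-measure set of admissible positions remains; since the available $L$-translates $Q_k+\ell$ of $Q_k$ project onto $\TT_M$ covering it (for $\ell$ ranging over all of $L$, because $L$ is full-rank and $\pi(L+Q_k)=\TT_M$ once $Q_k$ has positive measure — more carefully, the $\pi(\ell+Q_k)$ cover $\TT_M$ with bounded multiplicity), some choice of $\ell_k$ makes $\pi(Q_k+\ell_k)$ avoid the forbidden set up to a null set. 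After possibly shrinking $Q_k$ by a null amount (or arguing with density points), one gets an \emph{exact} disjointness, and iterating over all finitely many $i$ (finitely many once we also truncate—see below) yields $F$.

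There are two points that need care. First, boundedness: the cube $Q_k$ has $L$-translates $Q_k+\ell$ landing in \emph{every} region of the torus, but we need $\ell_k$ to stay in a \emph{bounded} subset of $L$ uniformly in $k$. This is fine because the torus $\TT_M$ is compact: a \emph{fixed finite} set $S\subset L$ already has $\bigcup_{\ell\in S}\pi(\ell+Q_k)=\TT_M$ (again with bounded multiplicity, uniformly in $k$ once $\varepsilon$ is fixed), so we may always take $\ell_k\in S$. Hence $F\subseteq P+S$ is bounded. The price is that now only \emph{bounded} multiplicity, not multiplicity one, of the covering $\bigcup_{\ell\in S}\pi(\ell+Q_k)$ is available, so at step $k$ the admissible set of $\ell$ has measure at least $(\vol M-\vol L)/(\#S\cdot C)>0$, still positive — that suffices. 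Second, the transfer must be done with genuinely countably many cubes tiling all of $P$, but since $P$ is bounded and the $Q_i$ partition it, this is a countable process; the only subtlety is that the greedy bound $\sum_{j<k}|Q_j|\le \vol L$ must be strict, $<\vol M$, which it is. \textbf{The main obstacle} I anticipate is making the ``avoid a forbidden set of measure $<\vol M$'' step produce \emph{exact} (not just almost-everywhere) disjointness while simultaneously keeping $F$ measurable and bounded; the clean way around it is to only ever demand the packing inequality \eqref{pack} almost everywhere (which is all Theorem~\ref{Main} asks), so that null-set sloppiness in the torus is harmless, and to fix $\varepsilon$ and the finite set $S$ \emph{first}, deriving the uniform multiplicity bound $C=C(\varepsilon,S,M)$ before running the greedy selection. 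With $\varepsilon$ small enough that $\varepsilon^d\cdot(\text{number of cubes})=\vol L$ stays below $\vol M$ — automatic — and $S$ large enough to cover $\TT_M$ by $L$-translates of an $\varepsilon$-cube, the induction closes and $F=\bigcup_i(Q_i+\ell_i)$ is the desired bounded measurable set.
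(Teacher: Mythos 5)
Your reduction to choosing $\ell_i\in L$ so that the sets $\pi(Q_i+\ell_i)$ are pairwise disjoint in $\RR^d/M$ is the right reformulation, and it is essentially how the paper's key case begins. But the greedy selection step contains a genuine gap, and it is not the one you flagged. At step $k$ the forbidden set is a union of $k-1$ cubes of side $\varepsilon$ with total measure $\le \vol(L)<\vol(M)$; from this measure bound alone it does \emph{not} follow that some admissible translate $\pi(Q_k+\ell)$ avoids it. Your averaging argument (covering of $\RR^d/M$ by $\Set{\pi(Q_k+\ell):\ell\in S}$ with multiplicity $\le C$) only produces an $\ell$ for which $\pi(Q_k+\ell)$ meets the \emph{free} region in measure $\ge(\vol(M)-\vol(L))/\#S$; it does not produce an $\ell$ for which $\pi(Q_k+\ell)$ is \emph{contained} in the free region, which is what packing requires, and you cannot discard the offending part of $Q_k+\ell$ without destroying the tiling by $L$. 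Indeed the complement of the forbidden set, though of measure $\ge\vol(M)-\vol(L)$, need contain no $\varepsilon$-cube at all: already in $d=1$, with $\vol(M)=1.1$ and $\vol(L)=1$, a greedy placement can scatter the first $k$ intervals so that every gap has length $<\varepsilon$, and the process jams. The paper avoids exactly this by \emph{pre-reserving} the targets: inside a fundamental parallelepiped of $M$ it fixes, in advance, more than $N^d$ pairwise disjoint cubes of side slightly larger than $1/N$ (possible because $|\det|>1$), assigns each source cube $R_j$ its own target $Q_j$, and uses density of $L+M$ to slide $R_j$ strictly inside $Q_j$; no conflicts can arise, so no greedy argument is needed.

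A second, independent problem is your covering claim $\pi(L+Q_k)=\RR^d/M$ (and its finite version with $S$). This holds essentially only when $L+M$ is dense in $\RR^d$. In general $\overline{L+M}$ is, up to a linear change of variables, $\ZZ^m\times\RR^n$, and when $m\ge 1$ the translates $\pi(\ell+Q_k)$, $\ell\in L$, occupy only a neighborhood of a proper closed subgroup of $\RR^d/M$ once $\varepsilon$ is small; so the positions available to each cube are severely constrained and your selection step has nothing to select from. This is precisely why the paper splits the proof into the dense case ($m=0$), the fully commensurable case ($m=d$, a finite-group argument), and the intermediate case ($1\le m\le d-1$), the last requiring a decomposition $L=L_1\oplus L_2$, $M=M_1\oplus M_2$ and a combination of the other two cases. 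As written, your argument would need both the pre-reservation idea and this case analysis (or some substitute for it) to go through.
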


In different language Theorem \ref{Main} says that we can find a bounded almost fundamental domain of the sparser lattice $M$ which contains an almost fundamental domain of the denser lattice $L$. One might think, especially in light of the recent result in \cite{GK2025} (any two lattices of equal volume have a bounded common almost fundamental domain), that Theorem \ref{Main} might be proved by dilating the lattice $L$ to make it have equal volume with $M$, taking a common fundamental domain of these two and pass to a subset of it that is a fundamental domain for $L$.

This plan however is not feasible: there are fundamental domains of the dilated lattice which contain no fundamental domain of the original lattice. The easiest way to see this is to consider in $\RR$ the lattice $L = \ZZ$ and the lattice $M = \alpha\ZZ$, where $\alpha>1$ is irrational. Using the density of $M \bmod L$ it is easy to construct, for any $\epsilon>0$, a fundamental domain of $M$ which, taken mod 1,  lands in $[0, \epsilon]$, so no subset of it can be a fundamental domain of $L$.

In \S \ref{noFD}  we will prove Theorem \ref{NoFD}, and in \S \ref{MR} we will prove our main result, Theorem \ref{Main}.

\section{No bounded Common Fundamental Domain {if volumes differ}}\label{noFD}
In this section we will prove Theorem \ref{NoFD}. We should point out that in \cite{kolountzakis2022functions} it was shown that given the lattices $\ZZ^d$ and $a\ZZ^d$ where $a$ is an irrational number there does not exist a bounded common fundamental domain of them in $\RR^d$. Theorem\ref{NoFD} generalizes this result.
\begin{proof}[\textbf{Proof of Theorem \ref{NoFD}}]
Without loss of generality we may assume that $L=\ZZ^d$ and $M= A\ZZ^d$, for some $A$, $d\times d$ matrix with $|\det(A)|>1$. Assume that $F$, a common fundamental domain of $L$ and $M$, is bounded (no measurability is assumed). It is enough for us to show that there is no bounded common fundamental domain $K$ of the lattices $L, M$ in the group $L+M$. The reason for this is that if $F$ were bounded then we could define $K$, also bounded, as
$$
K = F \cap (L+M).
$$

We can write $L+M= \{ n + Am:\  n,m \in \ZZ^d\}$ and all the sums $n+A m$ are distinct because of our assumption $L\cap M = \Set{0}$.
We can also write $K=\{n +Am_n:\ n,m_n \in \ZZ^d\}$ for any common fundamental domain of $L$ and $M$ in $L+M$ (the $m_n$ are a permutation of $\ZZ^d$). Assume, in order to arrive at a contradiction,  that $K \subseteq B_r := [-r, r]^d$, for some constant $r>0$. Take $R \to +\infty$ and observe that, since $K\subseteq B_r $, if $n \in B_R$ then we get that:
\begin{equation}\label{Am_n}
	 	A m_n \in B_{R+r}.
\end{equation}
Therefore the number of different $m_n$ corresponding to all the $n \in B_R$ is
\begin{equation}\label{Rr}
\lesssim \frac{|B_{R+r}|}{\det A} = \frac{2^d(R+r)^d}{\det(A)}.
\end{equation}
As $R\to \infty$ the number of values of $n\in \ZZ^d\cap B_R$ is $\approx 2^d R^d$. But to each of the $n \in B_R$ corresponds a unique $m_n$, so the number in \eqref{Rr} should be at least $2^dR^d$, which it fails to be if $R$ is sufficiently large.

\end{proof}

\section{Proof of Theorem \ref{Main}}\label{MR}

The closed subgroups of $\RR^d$ are, up to a non-singular linear transformation, of the form
\begin{equation}\label{L+M}
\ZZ^m \times \RR^n
\end{equation}
where $m+n=d$, where $m=0,1,...,d$ (see Theorem 9.11 of \cite{hewitt2012abstract}). Thus we may assume that $\overline{L+M}=\ZZ^m \times \RR^n$ for some such decomposition $d=m+n$. We treat separately the cases: $m=0$, $m=d$ and $m\in \{1,2,..d-1\}$.

\subsection{Dense on $\RR^d$ {($m=0$)}}

This is the key case and it is proved by a careful implementation of the method first given in \cite{kolountzakis1997multi}.
\begin{theorem}\label{R}
Let $L,M$ be two full-rank lattices on $\RR^d$, such that $\vol{(L)}<\vol{(M)}$ and $\overline{L+M}=\RR^d$. There exists a bounded measurable set $F$ that tiles $\RR^d$ by translations of $L$ and packs $\RR^d$ by translations of $M$. In particular, $F$ can be taken as a finite union of polytopes.
\end{theorem}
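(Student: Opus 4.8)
After a non-singular linear change of variables we may assume $L=\ZZ^d$, so that $V:=\vol(M)>1=\vol([0,1)^d)$ and, by the hypothesis $\overline{L+M}=\RR^d$, the image $\Lambda$ of $\ZZ^d$ under the projection $\pi\colon\RR^d\to\TT:=\RR^d/M$ is dense in the compact torus $\TT$. The plan is to produce $F$ by cutting the unit cube $Q:=[0,1)^d$ into $N^d$ tiny subcubes $q_k$, $k\in\{0,\dots,N-1\}^d$, of side $1/N$, and translating each $q_k$ by a cleverly chosen vector $w_k\in\ZZ^d$. For any such choice $F=\bigcup_k(q_k+w_k)$ is a finite union of polytopes, it is bounded once $w_k\in\ZZ^d\cap[-R,R]^d$ for a fixed $R$, and it is again a fundamental domain of $\ZZ^d$ (the translated pieces stay pairwise disjoint, each lying in a single cell of $\ZZ^d$, and their classes mod $\ZZ^d$ still exhaust $\RR^d/\ZZ^d$), hence it tiles $\RR^d$ by $\ZZ^d$. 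So the only real task is to choose the $w_k$ so that $F$ packs $M$; since for $N$ large $\pi$ is injective on each $1/N$-cube (once $1/N$ is below the shortest nonzero vector of $M$ in the sup norm), this amounts to making the images $\pi(q_k+w_k)=\pi(q_k)+\pi(w_k)$ pairwise disjoint in $\TT$.

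Here the inequality $V>1$ gives the room we need: $N^d$ disjoint translates of a $1/N$-cube have total volume $1<V=\vol(\TT)$, so they fit, with slack, inside a fundamental parallelepiped $\Pi$ of $M$. I would first fix a small $\epsilon>0$ and choose a \emph{model} placement: put the $N^d$ tiny cubes on a grid of spacing $\tfrac1N+3\epsilon$ inside $\Pi$, arranged so that each cube together with its $3\epsilon$-neighborhood lies in $\Pi$; a volume count shows this is possible provided $N$ is large and $\epsilon$ is small compared to $1/N$ (the boundary layer of $\Pi$ that must be avoided has volume $O(1/N)$, negligible against $V-1$), and it leaves the model cubes pairwise separated by at least $3\epsilon$. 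This determines desired base points $m_k\in\TT$. The actual constraint is that the $k$-th base point must lie in the translate $\pi(\tfrac1N k)+\pi(\ZZ^d\cap[-R,R]^d)$; but since $\Lambda$ is dense, a compactness argument shows that for any prescribed $\epsilon$ there is $R=R(\epsilon)$ for which $\pi(\ZZ^d\cap[-R,R]^d)$, and hence each of its translates, is $\epsilon$-dense in $\TT$. I would then pick, for each $k$, an admissible base point $b_k$ within $\epsilon$ of $m_k$ and record the corresponding $w_k\in\ZZ^d\cap[-R,R]^d$. Perturbing each model cube by at most $\epsilon$ shrinks pairwise separations by at most $2\epsilon$, leaving them at least $\epsilon>0$; so the cubes $\pi(q_k+w_k)$ are still pairwise disjoint in $\TT$, the resulting $F$ packs $M$, and we are done.

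The hard part is the quantifier bookkeeping when $\vol L<\vol M$ is very tight: the subcubes must be genuinely small (so $\pi$ is injective on them and the boundary layer of $\Pi$ is negligible), the gap/net parameter $\epsilon$ must then be taken small relative to $1/N$ (so the dilated grid still fits in $\Pi$ and the perturbation cannot produce overlaps), and only afterward can $R$ be chosen large enough that $\pi(\ZZ^d\cap[-R,R]^d)$ is $\epsilon$-dense. Fixing the order $d\to M\to N\to\epsilon\to R$ and verifying that this finite, discrete set of admissible translations is fine enough to realize the continuous \emph{model} packing is the whole content; once that is arranged, that $F$ tiles with $\ZZ^d$, that its images in $\TT$ are disjoint tiny cubes, and that $\pi$ is injective on each piece are all immediate.
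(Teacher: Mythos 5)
Your proposal is correct and follows essentially the same route as the paper's proof: cut the unit cube of $L=\ZZ^d$ into $N^d$ small cubes, use the volume slack $\vol(M)>1$ to reserve pairwise separated, slightly larger target cells attached to a fundamental domain of $M$, and use the density of $\ZZ^d$ modulo $M$ to translate each small cube by an integer vector into its target, so that tiling with $L$ is automatic and disjointness of the images modulo $M$ gives the packing. Your torus/$\epsilon$-net bookkeeping (uniform $R$ via compactness, $3\epsilon$ cushions) is just a quantitatively more explicit version of the paper's ``place each small cube inside a larger grid cube of $P_2$'' step.
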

\begin{proof}
Without loss of generality we can assume that $L=\ZZ^d$ and $M=A\ZZ^d$, where $A\in M_d(\RR)$ with $|\det(A)|>1$ and  $\overline {\ZZ^d + A\ZZ^d}=\RR^d$  (with $M_d(\RR)$, we denote the set consisting of all the $d\times d$ matrices with real entries). Denote by $P_1 = [0, 1)^d$, a fundamental parallelepiped of $L$, and let $P_2$ be a fundamental parallelepiped of the lattice $M$.

The idea of the proof is simple: We partition the $d$-dimensional unit cube $P_1$, into axis aligned, disjoint in measure cubes of the same length and for $P_2$ we find axis-aligned, disjoint in measure cubes of the same but slightly larger length so that each of them lies inside $P_2$ (here we don't assume a partition of $P_2$). Then, using the density assumption of the group $L+M$, we can translate by an element of $L+M$ every cube  in the partition of $P_1$ \textit{inside} a (different each time) translated by an element of $M$, cube in $P_2$. This can be done since the cubes in $P_2$ are larger than the cubes in the partition in $P_1$. The fact that we can find more cubes in $P_2$ than the ones in the partition of $P_1$ comes straight from our assumption: $|\det(M)|>1$. Finally the union of these suitable translated cubes of $P_1$ by elements of $L$ is our set $F$.

\begin{figure}\label{construction on the plane}
\includegraphics[width=200mm]{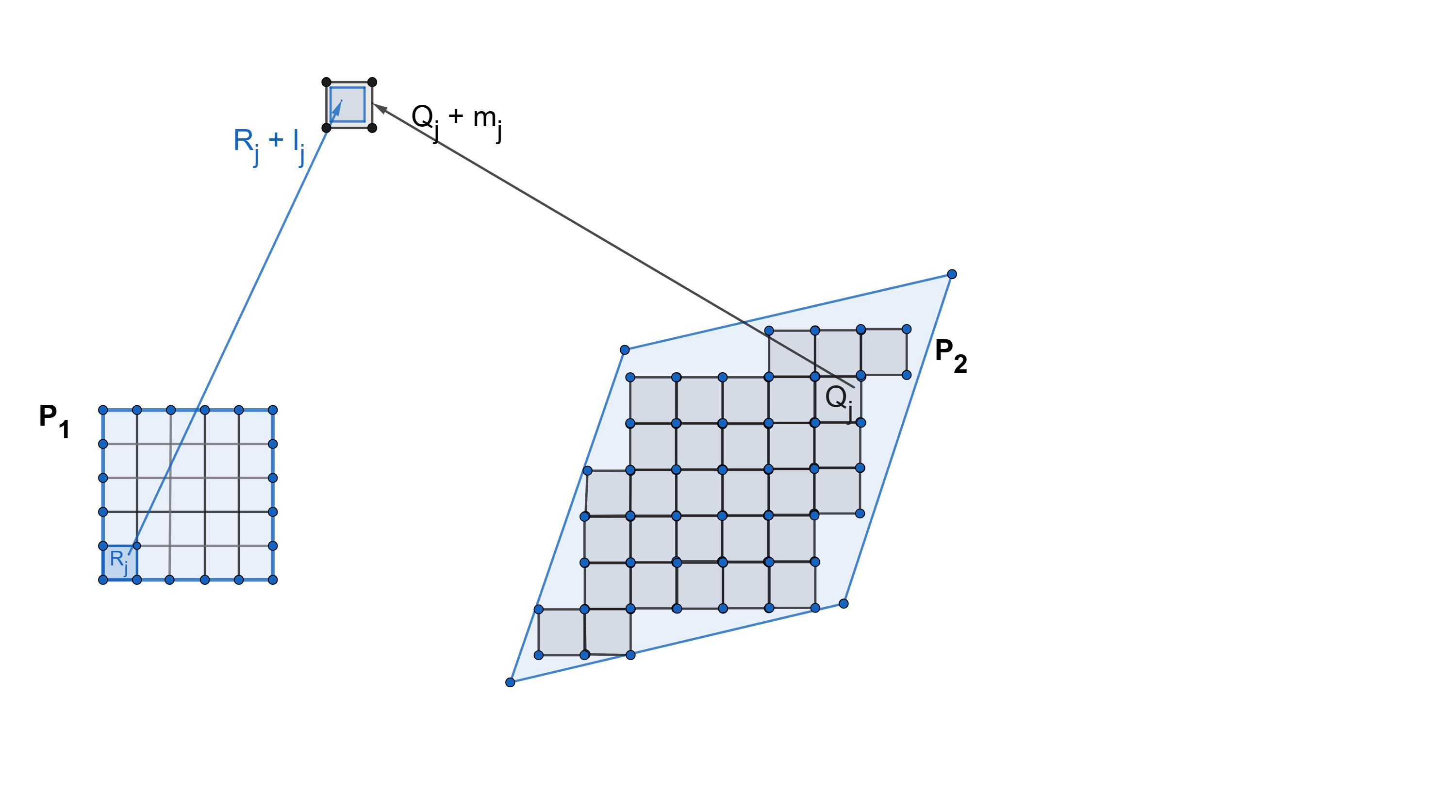}
\caption{Construction of the set $F$ on $\RR^2$. The letters $R_j$, $Q_j$ represent cubes in $P_1$, $P_2$ respectively, while $l_j \in L$, $m_j \in M$.}.
\end{figure}

We start by finding these cubes for $P_1$ and $P_2$. Pick a number $c^{\frac{1}{d}} \in (1,|det(M)|^{\frac{1}{d}})$ and find a $N\in \NN$ such that, when we impose a grid of side-length ${c^\frac{1}{d}}/{N}$ on $P_2$ and keep only the whole $({c^\frac{1}{d}}/{N})$--cubes, the leftover measure of $P_2$ is  $<|det(M)|-c$. If $N$ is sufficiently large then it is clear that this can be achieved, as the leftover set is contained in a $C/N$-neighborhood of the boundary of $P_2$ so their total volume is at most $C'/N$ and we arrange this to be however small we want if we take $N$ to be large.

This implies that the number of cubes of volume $c N^{-d}$ is $> N^d$. For such a $N$ call these finitely many axis-aligned and disjoint in measure cubes that lie inside $P_2$, $Q_s$ with $s\in \{1,2,\ldots,S\}$, where $S > N^d$. Similarly call the axis-aligned disjoint in measure cubes of length $\frac{1}{N}$ that partition $P_1=[0,1)^d$, $R_s$, for $1\le s\le N^d$.

Our set $F$ will be of the form
\begin{equation}\label{defF}
F = \bigcup_{j\le N^d} \left( R_j + l_j\right),
\end{equation}
for appropriately chosen $l_j \in L$. This set $F$ clearly tiles with $L$ for any choice of the $l_j$ (notice also that all sets $R_j+l_j$ are disjoint). Since $L+M$ is dense we can find $l_j+m_j\in L+M$ such that $R_j+l_j+m_j$ is contained in the (slightly larger) cube $Q_j$, for $j=1, 2, \ldots, N^d$. This process defined the $l_j$. Notice also that
$$
F \subseteq \bigcup_{j \le N^d} \left(Q_j - m_j\right),
$$
and the latter union is identical $\bmod M$ to a subset of $P_2$. This means that $F+M$ is a packing as we had to show.
\end{proof}

\subsection{The fully commensurable case ($m=d$)}
Here we treat the case where $m=d$ in (\ref{L+M}). For this case, the theorem below has been proved by Han and Wang \cite{han2001lattice}.
\begin{theorem}\label{Z}
Let $L,M$ be two full rank lattices on $\RR^d$, such that $\vol(L) \le \vol(M)$ and ${L+M}= \ZZ^d$. There exists a bounded measurable set $F$ that tiles $\RR^d$ by translations of $L$ and packs $\RR^d$ by translations of $M$. Again $F$ can be taken as a finite union of polytopes.
\end{theorem}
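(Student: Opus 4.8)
The plan is to reduce the statement to a finite, CRT-type fact about the two index-finite sublattices $L, M \subseteq \ZZ^d$. Write $p = \vol(L) = [\ZZ^d : L]$ and $q = \vol(M) = [\ZZ^d : M]$, so $p \le q$. I look for $F$ of the simplest possible shape, a finite union of unit cubes,
\[
F = [0,1)^d + T, \qquad T \subseteq \ZZ^d \text{ finite},
\]
which, being a union of $|T|$ unit cubes, is bounded and a finite union of polytopes. Expanding the tiling and packing sums and using that $[0,1)^d$ tiles $\RR^d$ by $\ZZ^d$, one checks that $F + L$ is a tiling exactly when the reduction map $T \to \ZZ^d/L$ is a bijection (so in particular $|T| = p$), and $F + M$ is a packing exactly when the reduction map $T \to \ZZ^d/M$ is injective. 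Thus it suffices to produce a set $T \subseteq \ZZ^d$ that is a transversal of $\ZZ^d/L$ and has $p$ distinct images in $\ZZ^d/M$.

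For this I would use the group homomorphism
\[
\phi \colon \ZZ^d \longrightarrow \ZZ^d/L \times \ZZ^d/M, \qquad x \longmapsto (x + L,\ x + M),
\]
whose kernel is $L \cap M$. The hypothesis $L + M = \ZZ^d$ forces $\phi$ to be surjective: since $L/(L\cap M) \cong (L+M)/M = \ZZ^d/M$ we get $[\ZZ^d : L\cap M] = [\ZZ^d : L]\,[L : L\cap M] = pq$, which equals $|\ZZ^d/L \times \ZZ^d/M|$, so the injection $\ZZ^d/(L\cap M) \hookrightarrow \ZZ^d/L \times \ZZ^d/M$ induced by $\phi$ is an isomorphism. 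Now pick any injection $\psi \colon \ZZ^d/L \to \ZZ^d/M$ (possible because $p \le q$; a map of sets, no homomorphism needed), set $T_0 = \{(a, \psi(a)) : a \in \ZZ^d/L\} \subseteq \ZZ^d/L \times \ZZ^d/M$, and let $T \subseteq \ZZ^d$ be any set of preimages of $T_0$ under $\phi$, one point for each of the $p$ elements of $T_0$. Then the reduction $T \to \ZZ^d/L$ is the first-coordinate projection, hence a bijection, and the reduction $T \to \ZZ^d/M$ sends the point over $(a,\psi(a))$ to $\psi(a)$, hence is injective. So $F = [0,1)^d + T$ works.

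I do not expect a genuine obstacle; the entire content sits in the two reductions. The first is the routine but essential dictionary between the measure-theoretic notions (tiling and packing by a lattice) and the combinatorial ones (transversal of, and injection modulo, the lattice), which should be written out carefully. The second is the remark that $L + M = \ZZ^d$ is precisely what makes $\phi$ onto, i.e.\ that $\ZZ^d/(L\cap M)$ splits as the ``independent'' product of $\ZZ^d/L$ and $\ZZ^d/M$; granting this, the existence of the twisted transversal $T_0$ is just the pigeonhole inequality $p \le q$. (When $p = q$ one takes $\psi$ bijective and $F$ becomes a bounded common fundamental domain of $L$ and $M$, which recovers the equal-volume commensurable case.)
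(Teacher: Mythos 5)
Your proposal is correct and is essentially the paper's argument: the paper also reduces to finding a finite set of integer points that is a transversal of $\ZZ^d/L$ and injective modulo $M$, pairs the classes of $\ZZ^d/L$ injectively with classes of $\ZZ^d/M$, and thickens by a unit cube. Your surjectivity of $\phi$ via the index count is just a CRT rephrasing of the paper's step of passing to $G=\ZZ^d/(L\cap M)$ and using the direct sum $G=(L/H)\oplus(M/H)$ to realize each pair $(a,\psi(a))$ by choosing representatives, so the two proofs differ only in packaging.
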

\begin{proof}
Clearly we have that $\vol(L),\vol(M)\in \ZZ_{+}$. We shall first find a finite set $F_1$ that tiles $L+M=\ZZ^d$ by translations of $L$ and packs $\ZZ^d$ by translations of $M$. Then we define
$$
F = [0, 1]^d + F_1,
$$
which is obviously a bounded and measurable set and satisfies the requirements of the Theorem.

Let $H=L\cap M$, $G=\ZZ^d/H$, $L_1=L/H$ and $M_1=M/H$ and notice that it is enough for us to find a set $E$ that tiles $G$ by translations of $L_1$ and packs $G$ by translations of $M_1$. Then, since the group $H$ in the quotient groups $G,M_1,L_1$ is the same, we can take $F_1:=E$. Keep in mind that $F_1$ must contain exactly one element from every class $\bmod(L)$ in $\ZZ^d$, while the set $E$ contains one element from each class in $\Ds\frac{\ZZ^d/H}{L/H}=\ZZ^d/L$. Furthermore, each element of $F_1$ must belong to a different class $\bmod(M)$ while $E$ contains at most one element from each class in $\Ds\frac{\ZZ^d/H}{M/H} = \ZZ^d/M$.

 Write $l=[G : L_1]$ and $m=[G:M_1]$. From $\vol(L)\le\vol(M)$, we clearly have that $l\le m$. Write the elements of each quotient group as $G/L_1 = \{g_1,...,g_l\}$ and $G/M_1 =\{h_1,...,h_m\}$. By a slight abuse of notation, we can view the elements $g_i,h_i$ as elements of $G$, even though they are classes in the quotient groups $G/L_1, G/M_1$, by picking arbitrary representatives for each class in the quotient groups. Since the sum $L_1+M_1=G$ is direct ($L_1 \cap M_1 = \Set{0}$), we can further assume that $g_i \in M_1$ for every $i\le l$, and $h_k \in L_1$ for every $k\le m$. Finally let $E = \{ g_i +h_i:\  i\le l\}$. Notice that $E$ contains exactly one element from each class $\bmod(L_1)$ which means $L_1+E = G$, that is, it tiles $G$ by translations of $L_1$. Also each element of $E$ belongs to a different class $\bmod(M_1)$ and so $F_1$ packs $G$ by translations of $M_1$, as we had to prove.

\end{proof}

\subsection{The intermediate case ($m\in \{1,2,...,d-1\}$)}
Finally we show the last case in (\ref{L+M})  of our main result Theorem \ref{Main}, 
which completes our proof. Some notation that we will use for the proof below: whenever we have two subgroups of a group $G$, call them $H_1,H_2$, such that $H_1 \cap H_2=\Set{0}$ we will denote their sum as $H_1 \oplus H_2$ instead of $H_1 + H_2$ (we call this a \textit{direct sum}). Also, for two sets $A,B$ that don't have the structure of a group, we will denote their sum as $A\oplus B$ only in the case where the translations by $B$ copies of $A$ do not overlap. In this case if $C=A\oplus B$, we say that $A$ is a tile of $C$ by translations of $B$ and symmetrically $B$ is a tile of $C$ by translations of $A$.
\begin{theorem}\label{ZxR}
Let $L,M$ be two full rank lattices on $\RR^d$, such that $\vol(L)< \vol(M)$ and $\overline{L+M}=\ZZ^m \times \RR^n$, where $m\in \{1,2,...,d-1\}$ and $n=d-m$. There exists a bounded measurable set $F$ that tiles $\RR^d$ by translations of $L$, and packs $\RR^d$ by translations of $M$.
\end{theorem}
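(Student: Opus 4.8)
The plan is to reduce the intermediate case $\overline{L+M} = \ZZ^m \times \RR^n$ to the two already-established cases, Theorem \ref{R} (the dense case $m=0$) and Theorem \ref{Z} (the fully commensurable case $m=d$), by factoring the problem along the product structure of the closure $G := \ZZ^m \times \RR^n$. After a linear change of variables we assume $\overline{L+M} = G$. Write $\pi_1 : \RR^d \to \RR^m$ and $\pi_2 : \RR^d \to \RR^n$ for the two coordinate projections, and consider the slice $N_0 := (\Set{0}^m \times \RR^n) \cap (L+M)$, which is a full-rank lattice in $\Set{0}^m \times \RR^n \cong \RR^n$ whose closure is all of $\RR^n$; the quotient $(L+M)/N_0$ is then a full-rank subgroup of $\ZZ^m$ of finite index. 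The idea is that in the "$\ZZ^m$ direction" the lattices behave like the commensurable case of Theorem \ref{Z}, while in the "$\RR^n$ direction" (the fibers) they behave like the dense case of Theorem \ref{R}. First I would set up the relevant sublattices of $L$ and $M$: let $L' = L \cap N_0$, $M' = M \cap N_0$ — these are full-rank lattices in $\RR^n$ with $\overline{L' + M'} = \RR^n$ — and let $\bar L, \bar M$ be the images of $L, M$ in $\ZZ^m$ under $\pi_1$, which are finite-index subgroups of $\ZZ^m$.

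The construction then proceeds fiber by fiber. Working inside $L+M$ (as in the proof of Theorem \ref{NoFD}, a bounded set intersected with $L+M$ retains all tiling/packing information, so it suffices to tile $L+M$ by $L$ and pack it by $M$, then thicken by a fixed bounded fundamental cell to return to $\RR^d$), I would first choose coset representatives in the discrete direction: pick a finite set $T \subseteq L+M$ of representatives for the cosets of $L' \oplus M'$ inside $L+M$ mapping to a transversal of $\bar L$ inside $\ZZ^m$ in a way compatible with the direct-sum structure $\bar L \cap \bar M$, exactly as the elements $g_i, h_i$ are chosen in the proof of Theorem \ref{Z}. The volume inequality $\vol(L) < \vol(M)$ translates into a strict inequality between the relevant indices: the "number of $L$-cosets we must cover" is strictly smaller than the "number of $M$-cosets available," counted appropriately across the product. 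Then, within each fiber — which is a coset of $\RR^n$ — I would invoke the construction of Theorem \ref{R}: partition a fundamental cell of $L'$ in $\RR^n$ into small cubes, and translate each, using density of $L' + M'$ in $\RR^n$, into a slightly larger cube lying inside a fundamental cell of $M'$, with room to spare because $\vol(M') / \vol(L') \geq 1$ and, crucially, we have at least one fiber's worth of slack from the strict volume inequality to absorb into the discrete bookkeeping.

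The set $F_1 \subseteq L+M$ is then the union, over the chosen discrete representatives $t \in T$, of the translated cube-unions produced in the corresponding fiber; by construction $F_1 + L$ tiles $L+M$ (the discrete representatives cover $\bar L$-cosets exactly, and within each the cube partition tiles the $L'$-direction) and $F_1 + M$ packs $L+M$ (distinct discrete representatives land in distinct $\bar M$-cosets by the direct-sum choice, and within each fiber the enlarged cubes are pairwise disjoint mod $M'$). Finally $F := [0,1)^d + F_1$ — or more precisely $F_1$ thickened by a fixed bounded fundamental domain of the lattice $L \cap N_0 \oplus (\text{a section of } \bar L)$, so that the thickening is compatible with the tiling — is bounded, measurable, a finite union of polytopes, tiles with $L$ and packs with $M$.

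The main obstacle I anticipate is the bookkeeping at the interface of the two directions: one must choose the discrete transversal $T$ and the per-fiber cube constructions \emph{coherently}, so that translating a cube by an element of $L+M$ to move it into position does not disturb which $\ZZ^m$-coset it sits in, and so that the packing condition for $M$ is respected both "across fibers" (different $t \in T$ giving different $\bar M$-cosets) and "within a fiber" (the Theorem \ref{R} construction handling $M'$). Making the volume accounting precise — showing that $\vol(L) < \vol(M)$ yields exactly enough slack, distributed as $[\ZZ^m : \bar L] \cdot \vol(L') \cdot (\text{lattice covolume factors})$ versus the analogous product for $M$, so that strictness in the product forces strictness where we need it — is the delicate computational heart of the argument, though conceptually it is just Fubini for the product group $\ZZ^m \times \RR^n$.
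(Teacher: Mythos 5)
Your overall strategy is the same as the paper's (split the problem along the product structure $\ZZ^m\times\RR^n$, do Theorem \ref{Z}-style coset bookkeeping in the discrete direction and invoke Theorem \ref{R} in the $\RR^n$ fibers), but there is a genuine gap at the crucial point. You apply Theorem \ref{R} directly to the fiber lattices $L'=L\cap(\Set{0}^m\times\RR^n)$ and $M'=M\cap(\Set{0}^m\times\RR^n)$, asserting that $\vol(M')/\vol(L')\ge 1$. This need not hold: the hypothesis $\vol(L)<\vol(M)$ only gives $\det(L_1)\det(L_2)<\det(M_1)\det(M_2)$ in the paper's notation, and the fiber factors can go the wrong way. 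Concretely, in $\RR^2$ take $L=\ZZ^2$ and $M=3\ZZ\times\alpha\ZZ$ with $\alpha=1/\sqrt2$: then $\overline{L+M}=\ZZ\times\RR$, $\vol(L)=1<3\alpha=\vol(M)$, but $\vol(L')=1>\alpha=\vol(M')$. In that situation your fiber-by-fiber plan cannot work even in principle: a set that tiles a single fiber with $L'$ has full density $1/\vol(L')$ there, and its translates by $M'\subseteq M$ alone (which stay in the same fiber) must already overlap, so no choice of discrete transversal can rescue the $M$-packing. Your remark that the strict inequality gives ``one fiber's worth of slack to absorb into the discrete bookkeeping'' points at the right phenomenon but supplies no mechanism for it; this is exactly where the paper has to do real work. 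The paper's fix is to pass to superlattices $L_2'\supseteq L_2$ and $M_2'\supseteq M_2$ inside $\Set{0}^m\times\RR^n$ with $[L_2':L_2]=\det(M_1)$ and $[M_2':M_2]=\det(L_1)$, which rebalances the covolumes so that $\det(L_2')<\det(M_2')$; Theorem \ref{R} is applied to $L_2',M_2'$ to produce one set $E$, and then $E$ is replicated at positions $x+y+\phi(x)+\psi(y)$, where $y\in J_1\subseteq L_1$, $x\in K_1\subseteq M_1$ are discrete-direction coset representatives and $\phi,\psi$ are bijections onto fiber-direction coset representatives $J_2\subseteq L_2'$, $K_2\subseteq M_2'$. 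This coupling spreads a single $L$-fundamental domain over $\det(M_1)$ fibers, which is what makes the $M$-packing possible when $\vol(L')>\vol(M')$; your construction keeps each $L$-tile confined to one fiber and therefore fails in such cases.

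A secondary issue: your reduction ``it suffices to tile $L+M$ by $L$ and pack it by $M$, then thicken'' is not sound here. In the intermediate case $L+M$ is a countable group that is dense in the $\RR^n$ directions, so set-theoretic tilings of $L+M$ carry no measure-theoretic information and cannot be thickened into measurable tiles of $\RR^d$ (the argument you cite from Theorem \ref{NoFD} is purely set-theoretic and goes in the opposite direction). The correct ambient object, as in the paper, is the closed group $\ZZ^m\times\RR^n$ with its Haar measure; one constructs a bounded measurable $F_1$ tiling it with $L$ and packing it with $M$, and then thickens only in the discrete coordinates by $[0,1]^m\times\Set{0}^n$.
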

\begin{proof}
	Once again, it is enough to find a bounded measurable set $F_1$ that tiles $\overline{L+M}= \ZZ^m \times \RR^n$ by translations of $L$ and packs $\ZZ^m \times \RR^n$ by translations of $M$. Then the set 
	\begin{equation*}
		F:= [0,1]^m \times \{0\}^n + F_1,
	\end{equation*}
    which is clearly a bounded and measurable set, satisfies the requirements of our theorem. In particular we will show that $F_1$ is a finite union of polytopes and, therefore, so is $F$.
    
A brief description of what will follow: we will first split each of our lattices $L$, $M$ into two disjoint sublattices $L_1,L_2$ and $M_1,M_2$ respectively such that $L=L_1\oplus  L_2$ and $M=M_1\oplus M_2$. The sublattices $L_2,M_2$ will lie in the subspace $\{0\}^m \times \RR^n$ having rank $n$, while $L_1,M_1$, which lie in $\ZZ^m \times \RR^n$, have rank $m$. Then we will show the following two equalities 
\begin{equation}\label{L_1+M_1}
   	L_1 +M_1 +\{0\}^m\times \RR^n = \ZZ^m \times \RR^n
\end{equation}
and 
\begin{equation}\label{L_2+M_2}
   	\overline{L_2+M_2} =  \{0\}^m \times \RR^n.
\end{equation} From (\ref{L_2+M_2}) with $L_2,M_2$ having rank $n$, we get that $L_2 + M_2$ is dense on $\{0\}^m \times \RR^n$ and so by the Theorem \ref{R} we can find a set $E\subseteq \{0\}^m \times \RR^n$ that tiles $\{0\}^m \times \RR^n$ by translations of $L_2$ and packs $\{0\}^m \times \RR^n$ by translations of $M_2$. Of course, in order to use Theorem \ref{R} for our cause, we also want to have $\vol_n(L_2)\le \vol_n(M_2)$, where $\vol_n (.)$ is simply the $n$-dimensional volume. We achieve this by considering an appropriate superlattice for $L_2$ and $M_2$ on $\{0\}^m \times \RR^n$ with the desired volumes. Finally, we will find a set of  suitable, disjoint,  finitely many translated copies of the set $E$ which will tile/pack all slices $\{k\} \times \RR^n$ for $k\in \ZZ^m$ by translations of $L$ and $M$ respectively. This set will be the set $F_1$.

We start by defining the lattices $L_2= L \cap (\{0\}^m \times \RR^n)$ and $M_2= M\cap (\{0\}^m \times \RR^n)$. We will show that $L_2$ has rank $n$ and so has $M_2$. Indeed if $\rank(L_2)=r<n$ then   there exists a sub lattice $L_1$ of $L$, of rank $d-r>m$, such that $L=L_1 \oplus L_2$ \cite[Corollary 3, p. 14]{cassels1996introduction}. Let $l_1,...,l_{m+1}$ be $m+1$, $\RR$-linearly independent elements of $L_1$. Denote by $\pi (l_i)$  the projection of $l_i$ to the space $\ZZ^m \times \{0\}^n$ and so $\pi(l_1)$, say, is a $\ZZ$- linear combination of $\pi(l_2),...,\pi(l_{m+1})$ which implies that there exists a non zero element of $L_1$ which is a $\ZZ$-linear combination of $l_1,...,l_{m+1}$, that lies on $\{0\}^m \times \RR^n\cap L= L_2$ which contradicts with the fact that $L_1 \cap L_2=\{0\}$. In fact we showed something stronger: given a $\ZZ$ basis of $L_1$, $\{l_1,..,l_m\}$, then its projection to $\ZZ^m \times \{0\}^n$ is a set consisting of $m$, $\ZZ$-linearly independent elements on $\ZZ^m \times \{0\}^n$. Equivalently every element in $L_1$ must belong to a different class $\bmod (\{0\}^m \times \RR^n)$. In other words we also showed that:
\begin{equation}\label{L_1+R}
[\ZZ^m \times \RR^n : L_1 \oplus \{0\}^m \times \RR^n]= \vol_m(L_1).
\end{equation}
Similarly we get
\begin{equation}\label{M_1+R}
[\ZZ^m \times \RR^n : M_1 \oplus \{0\}^m \times \RR^n]= \vol_m(M_1).
\end{equation}
    
Now, since $L_2,M_2$ have rank $n$ and $\overline{L+M}=\ZZ^m \times \RR^n$, we get that the sum $L_2 +M_2$ is dense on $\{0\}^m \times \RR^n$ and so we obtain (\ref{L_2+M_2}). To obtain \eqref{L_1+M_1} we notice that
\begin{align*}
\ztrn &\supseteq L_1+M_1+\trn \\
 &= \overline{L_1+M_1+\trn} \\
 &\supseteq \overline{L_1+M_1+L_2+M_2}\\
 &= \overline{L+M}\\
 &= \ztrn.
\end{align*}

Abusing notation we can write $L = L\ZZ^d$, $M = M\ZZ^d$, where $L, M$ are $d \times d$ nonsingular matrices. The columns of these matrices can be any basis of the lattices
so we choose the first $m$ to be a basis of $L_1$ (resp.\ $M_1$) and the last $n$ to be a basis of
$L_2$ (resp.\ $M_2$). The matrices $L, M$ are now lower block triangular
\begin{equation*} L=
\begin{pmatrix}
   	L_1 & 0 \\
   	\star & L_2
\end{pmatrix},
\qquad
M=
\begin{pmatrix}
   	M_1 & 0 \\
   	\star & M_2
\end{pmatrix} 
\end{equation*}
where the $m\times m$ matrices $L_1, M_1$ have integer entries since these entries represent
the first $m$ coordinates of a basis of $L_1, M_1 \subseteq \ZZ^m \times \RR^n$. It follows that $\vol(L)= \det (L_1) \det(L_2)$ and $\vol(M)=\det(M_1)\det(M_2)$. To avoid confusion, whenever $\det(L_i)$ or $\det(M_i)$ is written, we refer to $L_i$ or $M_i$ as the diagonal block of the matrix $L$. In every other case whenever $L_i$ or $M_i$ appears in the text, we mean the sublattice $L_i$ or $M_i$ respectively for $i = 1,2$.

Now that we are done with the splitting of the lattices $L$ and $M$, we want to apply Theorem $\ref{R}$ for the lattices $L_2, M_2$, but we do not know if $\det(L_2)\le \det(M_2)$. For this reason we consider superlattices $L_2', M_2'$ of $L_2, M_2$ respectively inside $\{0\}^m \times \RR^n$ such that $[L_2': L_2]=\det(M_1)$ and $[M_2':M_2]= \det(L_1)$. Clearly by our assumption that
$$
\det(L_1)\det(L_2)=\vol(L)< \vol(M)=\det(M_1)\det(M_2)
$$
we get that
$$
\det(L_2 ')= \frac{\det(L_2)}{\det(M_1)}<\frac{\det{(M_2)}}{\det{(L_1)}}=\det(M_2).
$$
Due to $\overline{L_2+M_2}= \{0\}^m \times \RR^n$ we get that $\overline{L_2 '+M_2 '}=\{0\}^m \times \RR^n$ and so by Theorem \ref{R} we find a bounded measurable set $E$ (finite union of polytopes)
of volume
\begin{equation}\label{volE}
\vol_n(E) = \det(L_2 ')= \frac{\det(L_2)}{\det(M_1)},
\end{equation}
that tiles $\{0\}^m \times \RR^n$ by translations with $L_2 '$ and packs $\{0\}^m \times \RR^n$ by translations with $M_2 '$.

Finally, in order to find the set $F_1$ we need to find the suitable set of copies of the set $E$ that we mentioned at the start of the proof. For this, let us first find a finite set $J_2\subseteq L_2 '$ and $K_2\subseteq M_2 '$ such that $L_2 ' =L_2 \oplus J_2$ and $M_2 '= M_2 \oplus K_2$. The set $J_2$ (and similarly  $K_2$) can be obtained by taking a complete set of coset representatives of $L_2$ in $L_2'$ and of $M_2$ in $M_2'$. Since $[L_2 ':L_2] = \det(M_1)$ we get $|J_2|= \det(M_1)$ and similarly $|K_2| = \det(L_1)$.

\begin{figure}
   	\includegraphics[width=150mm]{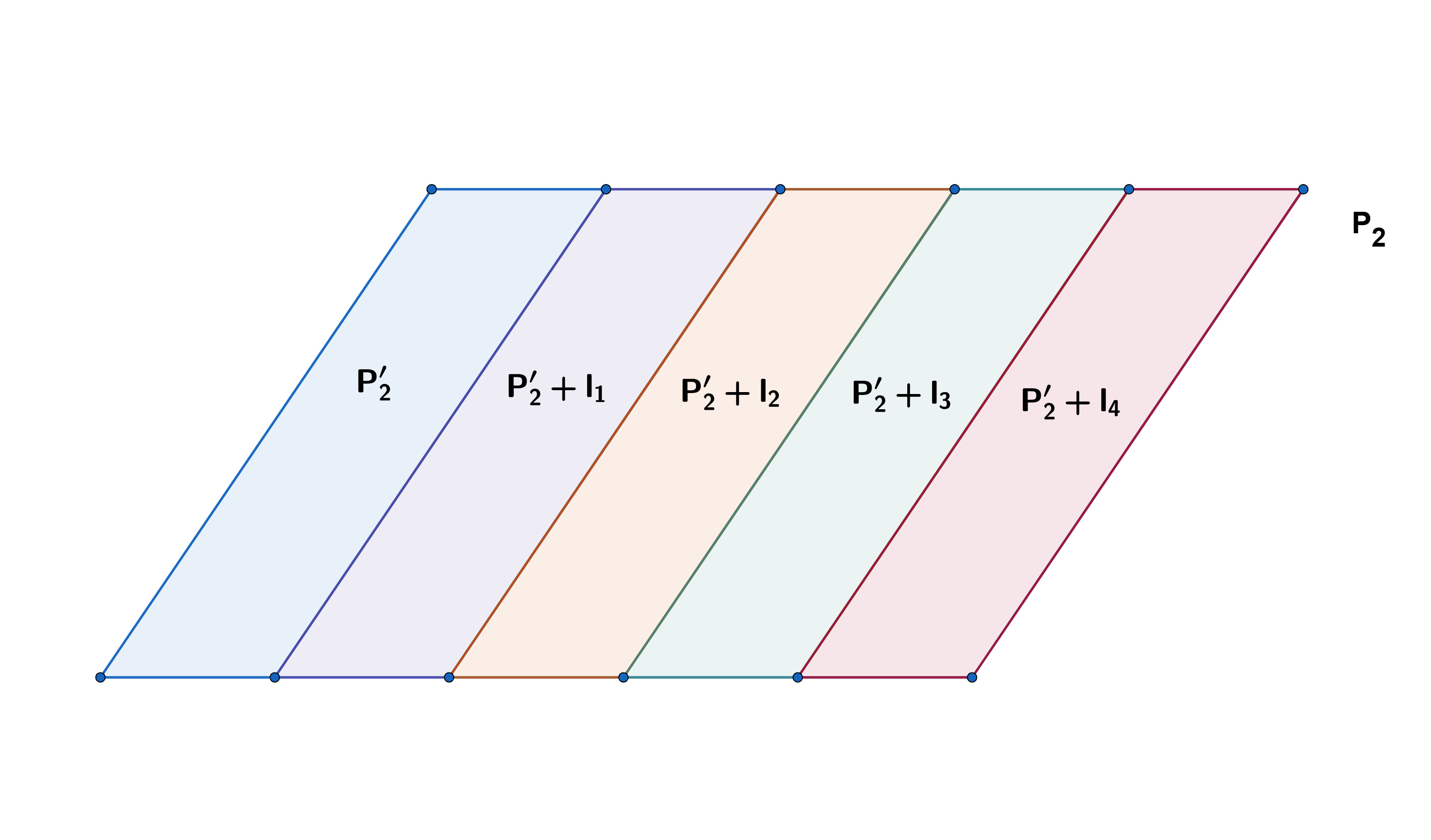}
   	\caption{Obtaining the set $J_2$ when $L_2'$ is a superlattice in the plane. $P_2$, $P_2 '$ are the fundamental parallelepipeds of $L_2$, $L_2 '$ with $L_2 \subseteq L_2 '$ where $[L_2' : L_2]=5$. $J_2=\{0,l_1,l_2,l_3,l_4\}$}.
\end{figure}

Now, regarding the lattices $L_1,M_1$, from (\ref{L_1+M_1}) we can get a finite set $J_1 \subseteq L_1$ and a finite set $K_1 \subseteq M_1$ such that 
\begin{equation}\label{J_1+M_1}
   	J_1 \oplus M_1 \oplus \{0\}^m \times \RR^n =\ZZ^m \times \RR^n
\end{equation} and 
\begin{equation}\label{K_1 + L_1}
   	K_1 \oplus L_1 \oplus \{0\}^m \times \RR^n =\ZZ^m \times \RR^n
\end{equation}
From ($\ref{L_1+R}$) we get that  $|K_1| = \det(L_1)=|K_2|$ and from  (\ref{M_1+R}) we get that $|J_1|=\det(M_1)=|J_2|$.
Take any two bijections $\phi : J_1 \to J_2$ and $\psi : K_1 \to K_2$ and define the set:
\begin{equation}\label{F_1}
   	F_1 = \{ x+y +\phi(x)+\psi(y):\ x\in K_1,y \in J_1\} \oplus E \subseteq \ZZ^m \times \RR^n
\end{equation}
which is clearly a bounded measurable set since $E$ is. To complete our proof we need to show that  $F_1$ is packing $\ZZ^m \times \RR^n$ by translations of $L$  and $M$ separately and also show that all these finitely many copies of $E$ in (\ref{F_1}) do not overlap. In other words the sum in $\eqref{F_1}$ is indeed direct. By proving these two arguments, it is easy to see that 
\begin{equation*}
    \vol_n(F_1) = \vol_n(E) \cdot \Abs{K_1} \cdot \Abs{J_1} =
 \vol_n(E) \det(L_1) \det(M_1) = \det(L).
\end{equation*}
which implies that $F_1$ tiles $\ZZ^m \times \RR^n$ by translations of $L$  as we have to show. (Any measurable set that packs with a lattice and its volume is equal to the volume of the lattice also tiles with the lattice. This is easy to see by integrating the sum function of the translated tiles over a large volume. This is also true for any periodic arrangement: if a measurable set packs periodically and its volume times the density of the translates is 1, then this packing is in fact a tiling.)

We will show the packing condition of the set $F_1$ by translations of the lattice $L$. The arguments we will use are based on equations we showed earlier in the proof. Similar equations hold for $M$ and so the proof for the packing condition of the set $F_1$ by translations of the lattice $M$ is similar. We remind to the reader that
\begin{equation*}
   	J_1\subseteq L_1,\quad J_2\subseteq L_2 '\subseteq \{0\}^m \times \RR^n 
\end{equation*}
and
\begin{equation*}
  	 K_1 \subseteq M_1, \quad K_2 \subseteq M_2' \subseteq \{0\}^m\times \RR^n.
\end{equation*}
 
Assume that there exist two points of $L$, $l= l_1 + l_2$ and $\widetilde{l}=\widetilde{l_1} + \widetilde{l_2}$ with $l_1,\widetilde{l_1} \in L_1$ and $l_2, \widetilde{l_2}\in L_2$ such that $l+F_1$ and $\widetilde{l} + F_2$ intersect at a set of positive measure. This means that there exists $x,\widetilde{x} \in K_1$, $y,\widetilde{y} \in J_1$ such that the translations  of $E$ by the elements
\begin{equation}\label{sum}
   	l_1 +l_2 +x +y + \phi(x) +\psi (y)
\end{equation}
and 
\begin{equation}\label{tildesum}
   	\widetilde{l_1} +\widetilde{l_2}+\widetilde{x}+\widetilde{y} +\phi(\widetilde{x})+\psi(\widetilde{y})
\end{equation}
overlap on a set of positive measure. We will show that $x=\widetilde{x}$, $y=\widetilde{y}$, $l_1=\widetilde{l_1}$ and $l_2 = \widetilde{l_2}$. As a consequence of this, we will get that the sum  in (\ref{F_1}) is indeed direct (i.e., the translated copies of $E$ that form the set $F_1$ are disjoint).

Notice that $l_1 +y$, $\widetilde{l_1} +\widetilde{y} \in L_1$, $x, \widetilde{x}\in K_1$ with the remaining terms in the sums (\ref{sum}) and (\ref{tildesum}) lying on $\{0\}^m \times \RR^n$. From (\ref{K_1 + L_1}) we get that the set $L_1 \oplus \{0\}^m \times \RR^n$ forms a tiling of $\ZZ^m \times \RR^n$ by translations of $K_1$ and so $x=\widetilde{x}$. Again by (\ref{K_1 + L_1}) we get that the set $K_1 \oplus \{0\}^m\times \RR^n$  forms  a tiling of $\ZZ^m \times \RR^n$ by translations of $L_1$ which implies that 
\begin{equation}\label{l_1+y}
   	l_1 +y = \widetilde{l_1} + \widetilde{y}.
\end{equation}
From $x= \widetilde{x}$ we also get that $\phi(x)=\phi(\widetilde{x})$ and so, after removing $l_1+y$ and $\wt l_1 + \wt y$ from (\ref{sum}) and (\ref{tildesum}) and also $\phi(x)$ and $\phi(\wt x)$, the translations $l_2+\psi(y)$ and $\widetilde{l_2} + \psi(\widetilde{y})$ of $E$ must overlap. But $l_2 + \psi(y), \widetilde{l_2} + \psi(\widetilde{y})\in L_2 '$ and we have showed that $L_2' \oplus E$ is a packing and so $l_2 + \psi(y)= \widetilde{l_2} + \psi (\widetilde{y})$.  Since
$L_2 ' = L_2 \oplus J_2$ and $l_2, \widetilde{l_2}\in L_2$, $\psi(y),\psi(\widetilde{y})\in J_2$ we obtain that $l_2=\widetilde{l_2}$ and $\psi(y)=\psi(\widetilde{y})$ which also implies that $y=\widetilde{y}$. Finally from (\ref{l_1+y}) we have that $l_1= \widetilde{l_1}$ as we had to show and this completes our proof that $F_1$ packs with $L$. The same proof, with $M$ in place of $L$, shows that $F_1$ packs with $M$ as well and our proof is complete.

\end{proof}

\bibliographystyle{alpha}
\bibliography{mk-bibliography.bib}

\end{document}